\documentclass[12pt]{amsart}
\usepackage{amsbsy,amssymb,amscd,amsmath,amsthm, hyperref}
\usepackage{a4wide}
\usepackage[usenames,dvipsnames,svgnames,table]{xcolor}

\newcommand{\out}[1]{\ }

\newcommand{\RR}{{\mathbb R}}
\newcommand{\CC}{{\mathbb C}}

\newcommand{\abs}[1]{|#1| }
\renewcommand{\ge}{\geqslant}
\renewcommand{\le}{\leqslant}
\renewcommand{\phi}{\varphi}
\renewcommand{\epsilon}{\varepsilon}

\newcommand{\eps}{\varepsilon}

\DeclareMathOperator{\diam}{diam}

\DeclareMathOperator{\PSH}{PSH}

\DeclareMathOperator{\PS}{PS}

\makeatletter
\newtheorem*{rep@theorem}{\rep@title}
\newcommand{\newreptheorem}[2]{%
\newenvironment{rep#1}[1]{%
 \def\rep@title{#2 \ref{##1}}%
 \begin{rep@theorem}}%
 {\end{rep@theorem}}}
\makeatother

\newtheorem{theorem}{Theorem}[section]
\newreptheorem{theorem}{Theorem}

\newtheorem{lemma}[theorem]{Lemma}

\theoremstyle{definition}
\newtheorem{definition}[theorem]{Definition}
\newtheorem{example}[theorem]{Example}
\newtheorem{question}[theorem]{Question}

\theoremstyle{remark}
\newtheorem{remark}[theorem]{Remark}

\numberwithin{equation}{section}

\newtheoremstyle{case}
{3pt}
  {3pt}
  {}
  {}
  {\bfseries}
  {:}
  {.5em}
  {}
\theoremstyle{case}
\newtheorem{case}{Case}
\newtheorem{subcase}{Case}
\numberwithin{subcase}{case}

\hyphenation{pluri-sub-harmonic}
\begin{document}

\title{A note on approximation of plurisubharmonic functions}

\dedicatory{Dedicated to John Erik Forn\ae ss on occasion of his 70-th birthday.}
\author{H\aa kan Persson}
\address{Dept. of Maths. Uppsala University\\SE-751 06 Uppsala\\Sweden}
\email{hakan.persson@math.uu.se}
\author{Jan Wiegerinck}
\address{KdV Institute for Mathematics
\\University of Amsterdam
\\Science Park 105-107
\\P.O. box 94248, 1090 GE Amsterdam
\\The Netherlands}
\email{j.j.o.o.wiegerinck@uva.nl}
\subjclass{32U05, (31B25, 31B05)}
\keywords{plurisubharmonic
function; approximation; Mergelyan type approximation}

\begin{abstract} We extend a recent result of Avelin, Hed, and Persson about approximation of functions $u$ that are plurisubharmonic on a domain $\Omega$ and continuous on $\bar\Omega$, with functions that are plurisubharmonic on (shrinking) neighborhoods of $\bar\Omega$. We show that such approximation is possible if the boundary of $\Omega$ is $C^0$ outside a countable exceptional set $E\subset\partial \Omega$. In particular, approximation is possible on the Hartogs triangle. For H\"older continuous $u$, approximation is possible under less restrictive conditions on $E$. We next give examples of domains where this kind of approximation is not possible, even when approximation in the H\"older continuous case is possible.
\end{abstract}

\maketitle

\section{Introduction}\label{sec0}
In \cite{Sib} Sibony showed that any $C^\infty$-smooth pseudoconvex domain $\Omega$ in $\CC^n$ has  the \emph{P(luri)S(ub)H(armonic) Mergelyan property}, cf.  \cite{Hed}.  That is, every function $u$  that is plurisubharmonic on $\Omega$  and continuous on $\overline \Omega$ can be approximated uniformly on $\overline \Omega$ by continuous plurisubharmonic functions $v_j$ that are defined on (shrinking) domains $\Omega_j$ that contain $\overline \Omega$. Forn\ae ss and the second author extended this result in \cite {FoWi} to arbitrary domains with $C^1$-boundary. Recently, see \cite{Hed, AHP}, it was shown that the result remains valid if one only assumes  $C^0$-boundary.

Recall that a domain $G$ in $\RR^n$ has $C^0$-boundary if the boundary is locally the graph of a continuous function over an $(n-1)$-dimensional hyperplane. This is equivalent to saying that $G$ has the \emph{Segment Property} for every $z$ in $ \partial G$, cf.~\cite{Fr, Hed, AHP}, i.e.,  there exists a neighborhood $U$ of $z$ and a vector $w\in\RR^n$ such that 
\[U\cap\overline G+tw\subset G,\quad \text{ for all }0<t<1.\]

For an arbitrary domain in $G$ we will call $z\in\partial G$ a $C^0$-boundary point if $G$ has the Segment Property at $z$. The $C^0$-boundary points form a relative open subset of $\partial G$.

\smallskip
Let $\Delta$ be the unit disc in $\CC$. In \cite{Hed} it is observed that the domain $\Delta\setminus [-1/2,1/2]\}$ does not have the PSH-Mergelyan property.   The natural setting to study this kind of approximation problems, however, is  the case where $\Omega$ is a fat domain, i.e. $\Omega=(\overline\Omega)^o$. For domains in $\CC$ (and in fact $\RR^n$) results from classical potential theory give precise criteria for "Mergelyan type" approximation with \emph{harmonic} and \emph{subharmonic} functions. For harmonic functions this goes already back to Keldysh, \cite{Keld}, and Deny, \cite{Den}, see also  \cite{ArGa, Gar, Hedb}. We recall the result in Section \ref{Sec3}, where we give several examples of domains where the PSH-Mergelyan property does not hold. 

There is a big gap between the counterexamples in $\CC$ and domains with $C^0$-boundary. It is a natural question whether the Hartogs triangle, $R=\{(z,w): 0<|z|<|w|<1\}$ has the PSH-Mergelyan property, as it is known that $R$ does not have the Mergelyan property for holomorphic functions. As the main result of this note, however, we will  prove the following theorem.
\begin{reptheorem}{thm2}
Domains in $\CC^n$ of which the boundary is  $C^0$ with the possible exception of a countable set of boundary points,  have the PSH-Mergelyan property. 
\end{reptheorem}
In particular this shows that the Hartogs triangle has the PSH-Mergelyan property.

In Section \ref{sec:holder}, we study uncountable exceptional sets and show that when extra assumptions are made on the modulus of continuity of the approximand, the result of Theorem \ref{thm2} can be extended to certain uncountable sets. In Section \ref{Sec3}, we give several examples showing the difficulty of characterising the PSH-Mergelyan property. We also make some comparisons to the classical problem of (sub-) harmonic approximation and to approximation involving H\"older continuous functions.

\section{Approximation of plurisubharmonic functions} Let $\PSH(\Omega)$ denote the plurisubharmonic functions on $\Omega$ and $\PSH(\overline \Omega)$ the functions that are uniform limits of plurisubharmonic functions defined on (shrinking) neighborhoods of $\overline\Omega$. Let $B(a,r)\subset\CC^n$ denote the ball with center $a$ and radius $r$.

For the readers convenience, we recall the familiar definition of modulus of continuity.
\begin{definition}
	Suppose that $u$ is a function defined on the set $X$. A function $\omega:[0,\infty] \to [0,\infty]$ is called a \emph{modulus of continuity} for $u$ if
    \[
    	|u(z)-u(w)| \leq \omega\bigl(|z-w|\bigr), \quad \forall z,w \in X.
    \]
\end{definition}
\begin{lemma}\label{lem2}
Let $D$ be a bounded domain in $\mathbb C^n$ and let  $u$ be in $\PSH(D)\cap C(\bar D)$ with modulus of continuity $\omega$. Let $a \in D$. For every small $\epsilon>0$ there exists a function $v\in \PSH(D) \cup B(a, \epsilon)$ that is continuous on $D \cup B(a, \epsilon)$ and satisfies
\[
	\sup_{\bar D}|u -v|\leq 4\omega\Bigl(\diam(D)^{2/3}\epsilon^{1/3}\Bigr).
\]

\end{lemma}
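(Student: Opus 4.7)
The plan is to modify $u$ only inside a ball $B(a,R)$ of intermediate radius $R:=\diam(D)^{2/3}\eps^{1/3}$, chosen so that $\eps\ll R\ll\diam(D)$ for small $\eps$, and to leave $u$ unchanged outside. If $\overline{B(a,\eps)}\subset D$ there is nothing to do ($v=u$ works), so I assume throughout that $B(a,\eps)\setminus\overline D\ne\emptyset$, i.e.\ $a$ is on (or very near) $\partial D$ and there is genuine empty space to extend into.

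The technical core is to produce an auxiliary plurisubharmonic function $\phi$ on a neighbourhood of $\overline{B(a,R)}$ satisfying (i) $\phi\le u$ on $\partial B(a,R)\cap D$, (ii) $\phi\ge u$ on $\overline D\cap B(a,\eps)$, and (iii) $|\phi-u|\le 4\omega(R)$ on $\overline D\cap B(a,R)$. I would obtain $\phi$ as a Perron--Bremermann envelope on $B(a,R)$: the upper regularisation of the supremum of all PSH $\psi$ on $B(a,R)$ with $\psi\le u(a)-\omega(R)$ on $\partial B(a,R)\cap\overline D$ and $\psi$ bounded by a suitable constant near $u(a)+3\omega(R)$ on $\partial B(a,R)\setminus\overline D$. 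The non-emptiness of the latter piece is exactly what the PSH maximum principle requires in order to allow $\phi$ to reach values close to $u(a)$ at the centre of the ball, while the specific scale $R=\diam(D)^{2/3}\eps^{1/3}$ is the one for which a two-constants / harmonic-measure estimate makes the growth of $\phi$ from $\partial B(a,R)\cap D$ inwards to $B(a,\eps)$ exactly fit within the tolerance $4\omega(R)$ dictated by the modulus of continuity of $u$.

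Given such $\phi$, I define
\[
v(z):=\begin{cases}u(z),&z\in D\setminus B(a,R),\\ \max\{u(z),\phi(z)\},&z\in D\cap B(a,R),\\ \phi(z),&z\in B(a,\eps)\setminus D,\end{cases}
\]
and check the claims. Condition (i) forces $\max\{u,\phi\}=u$ in a neighbourhood of $\partial B(a,R)\cap D$, so the three cases glue continuously across that interface; condition (ii) forces $\max\{u,\phi\}=\phi$ throughout $\overline D\cap B(a,\eps)$, so $v$ coincides with the PSH function $\phi$ on all of $B(a,\eps)$ and is continuous across $\partial D\cap B(a,\eps)$. Plurisubharmonicity is then clear on each open piece (maximum of two PSH functions is PSH, and $\phi$ is PSH), and since $v$ agrees with a single PSH function in a neighbourhood of every interface point, $v\in\PSH(D\cup B(a,\eps))$. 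Finally, property (iii) gives $\sup_{\overline D}|u-v|=\sup_{\overline D\cap B(a,R)}\max\{0,\phi-u\}\le 4\omega(R)$, which is the desired estimate.

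The main obstacle is the actual construction of $\phi$. The PSH maximum principle actively prevents $\phi$ from being small on $\partial B(a,R)\cap D$ and yet large enough to dominate $u$ near $a$, so the only route open is to exploit the freedom available on $\partial B(a,R)\setminus\overline D$; making this trade-off quantitatively precise, so that the clean constant $4$ appears in (iii) at the specific scale $R=\diam(D)^{2/3}\eps^{1/3}$, is where the bulk of the work lies.
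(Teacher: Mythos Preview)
Your plan has a genuine gap: the auxiliary function $\phi$ with properties (i) and (ii) need not exist. You propose to exploit ``the freedom available on $\partial B(a,R)\setminus\overline D$'', but nothing in the hypotheses guarantees that this set is nonempty, let alone large in any quantitative sense. Take $D=\{1<|z|<10\}\subset\CC$, $a=1\in\partial D$, and $u(z)=-\log|z|$. Then $B(a,\eps)\setminus\overline D\ne\emptyset$ for every $\eps>0$, yet for $R=\diam(D)^{2/3}\eps^{1/3}$ and $\eps$ moderately small one has $2<R<9$, so $\partial B(a,R)\subset D$. Any $\phi\in\PSH(B(a,R))$ satisfying (i), i.e.\ $\phi\le u$ on $\partial B(a,R)$, is dominated on $B(a,R)$ by the Poisson extension $H$ of $u|_{\partial B(a,R)}$; a direct computation gives $H(a)=-\log R<0=u(a)$, so $\phi(a)<u(a)$, contradicting (ii). The obstruction is structural: you are trying to produce a PSH function on a \emph{ball} that lies below $u$ on the sphere but above $u$ at the centre, and the maximum principle forbids this whenever the sphere lies in $D$. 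Even when $\partial B(a,R)\setminus\overline D$ is nonempty, its harmonic measure from points of $B(a,\eps)$ depends on the unknown geometry of $\partial D$ and cannot be bounded below by anything involving only $\eps$ and $\diam(D)$.

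The paper's proof goes in the opposite direction and avoids this obstacle entirely. Instead of trying to push an auxiliary function \emph{above} $u$ near $a$, it pushes $u$ \emph{down}: set $\eta=\diam(D)^{2/3}\eps^{1/3}$, choose $\alpha$ of order $\omega(\eta)/\log(\diam(D)/\eps)$, and let
\[
v(z)=\begin{cases}\max\bigl\{\,u(z)+\alpha\log\bigl|(z-a)/\eta\bigr|,\ u(a)-2\omega(\eta)\,\bigr\},&|z-a|\le\eta,\\[1mm]
u(z)+\alpha\log\bigl|(z-a)/\eta\bigr|,&|z-a|>\eta.\end{cases}
\]
The logarithmic pole is plurisubharmonic on all of $\CC^n\setminus\{a\}$, so $u+\alpha\log|(z-a)/\eta|\in\PSH(D)$ regardless of the shape of $D$; the scale $\eta$ and the coefficient $\alpha$ are tuned so that on $|z-a|\le\eps$ this sum drops below $u(a)-2\omega(\eta)$, whence $v$ is identically constant there and extends trivially to $B(a,\eps)$. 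The estimate $|u-v|\le4\omega(\eta)$ is then a direct calculation. Note that $v$ differs from $u$ on all of $D$, not only on $B(a,\eta)$; your attempt to keep $v=u$ outside $B(a,R)$ is an extra constraint that the argument neither needs nor, as the example shows, can afford.
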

\begin{proof}
	Without loss of generality,  assume that $a$ is the origin and that $f(0) = 0$.
	
	Let $\eta=\diam(D)^{2/3}\epsilon^{1/3}$, and 
	\[
		\alpha=6\omega(\eta)\Bigr(\log\bigl(\diam(D)/\epsilon\bigr)\Bigr)^{-1}.
	\]
	 Define
	\[
		v(z)=\begin{cases}
			\max\{u(z)+\alpha \log\bigl| z/\eta\bigr|,-2\omega(\eta)\},&\mbox{if $|z|\leq \eta$};\\
			u(z)+\alpha\log\bigl| z/\eta\bigr|,&\mbox{if $|z|> \eta$}.
			\end{cases}
	\]
	Since $u(z)+\alpha\log\bigl| z/\eta\bigr|\geq - \omega(\eta)$ for $|z|=\eta$ it is obvious that $v\in \PSH(D)\cap C(\bar D)$. Furthermore, when $|z|\leq \epsilon$,
	\begin{align*}
		u(z)+\alpha \log\bigl| z/\eta\bigr|	&\leq \omega(\epsilon)+\alpha\log\bigl(\epsilon/\eta\bigr)\\
			&=\omega(\epsilon)+\frac{6\omega(\eta)}{\log\bigl(\diam(D)/\epsilon\bigr)}\log\bigl(\epsilon/\eta\bigr)\\
			&=\omega(\epsilon)+\frac{6\omega(\eta)}{\log\bigl(\diam(D)/\epsilon\bigr)}\log\left(\bigl(\epsilon/\diam(D)\bigr)^{2/3}\right)\\
			&\leq \omega(\epsilon)-4\omega(\eta)\\
			&\leq -3\omega(\eta),
	\end{align*}
	and hence $v\equiv -2\omega(\eta)$ on $D \cap B(a,\epsilon)$. Thus setting $v\equiv -2\omega(\eta)$ extends $v$ as a plurisubharmonic function to $B(a,\epsilon)$. Furthermore, if $\epsilon$ is small and $v(z)=u(z)+\alpha\log\bigl| z/\eta\bigr|$, it follows that
	\begin{align*}
		|u(z)-v(z)|	&\leq \alpha\Bigl|\log\bigl| z/\eta\bigr| \Bigr|\\
				& = \alpha\Bigl|\log|z|-\frac{2}{3}\log(\diam(D))-\frac{\log(\epsilon)}{3}\Bigr|\\
				&\leq \frac{2\alpha}{3} \Bigl|\log(\diam(D))-\log(\epsilon)\Bigr|\\
				&=4\omega(\eta)=4\omega\Bigl(\diam(D)^{2/3}\epsilon^{1/3}\Bigr)
	\end{align*}
	and if $v(z)=-2\omega(\eta)$, it follows that
	\begin{align*}
		|u(z)-v(z)| &\leq \omega(\eta)+2\omega(\eta)\\
				&= 3\omega\Bigl(\diam(D)^{2/3}\epsilon^{1/3}\Bigr).
	\end{align*}
\end{proof}

\begin{theorem}\label{thm2}Let $\Omega$ be a bounded domain in $\CC^n$. Suppose that $\Omega$ has $C^0$ boundary except at  countable set of boundary points $K$. Then $\Omega$  has the PSH-Mergelyan property.
\end{theorem}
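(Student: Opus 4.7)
My plan is to apply Lemma~\ref{lem2} iteratively at each point of $K$ to produce a plurisubharmonic function on an enlarged domain $\Omega_\infty$ whose boundary is $C^0$, and then to invoke the $C^0$-boundary PSH-Mergelyan theorem of \cite{AHP}.

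After replacing $K$ by the closed (and still countable) subset of $\partial\Omega$ consisting of all non-$C^0$ boundary points (closed because the $C^0$-boundary points form a relatively open subset of $\partial\Omega$), enumerate $K = \{z_1, z_2, \ldots\}$. Set $u_0 = u$, $\Omega_0 = \Omega$. Inductively, given $u_k \in \PSH(\Omega_k)\cap C(\overline{\Omega_k})$ with modulus of continuity $\omega_k$, apply Lemma~\ref{lem2} with $D = \Omega_k$ and $a = z_{k+1}$ (skipping if $z_{k+1}\in\Omega_k$), choosing $\epsilon_{k+1}>0$ so small that
\[
4\,\omega_k\bigl(\diam(\Omega_k)^{2/3}\epsilon_{k+1}^{1/3}\bigr)<\delta\cdot 2^{-k-2}
\]
and also $\epsilon_{k+1}\to 0$ as $k\to\infty$. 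This yields $u_{k+1}\in\PSH(\Omega_{k+1})\cap C(\overline{\Omega_{k+1}})$ on $\Omega_{k+1}=\Omega_k\cup B(z_{k+1},\epsilon_{k+1})$ with $\|u_{k+1}-u_k\|_{\overline{\Omega_k}}\le\delta\cdot 2^{-k-2}$. A telescoping estimate makes $\{u_k\}$ uniformly Cauchy on each $\overline{\Omega_j}$; since $K$ is closed and $\epsilon_k\to 0$, one checks that $\overline{\Omega_\infty}=\bigcup_k\overline{\Omega_k}$, so the limit $v$ is plurisubharmonic on $\Omega_\infty:=\Omega\cup\bigcup_k B(z_k,\epsilon_k)$, continuous on $\overline{\Omega_\infty}$, and satisfies $\|v-u\|_{\bar\Omega}<\delta/2$.

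The remaining, and principal, task is to choose the $\epsilon_k$ so that $\Omega_\infty$ itself has $C^0$ boundary: then \cite{AHP} approximates $v$ uniformly on $\overline{\Omega_\infty}\supset\bar\Omega$ by plurisubharmonic functions defined on shrinking open neighborhoods of $\overline{\Omega_\infty}$, completing the proof within error $\delta$. Boundary points of $\Omega_\infty$ lying on the smooth spheres $\partial B(z_k,\epsilon_k)$ cause no trouble. At an original $C^0$ boundary point $z\in\partial\Omega\setminus K$, I expect the same Segment-Property direction $w$ furnished by $\Omega$ at $z$ to continue to work for $\Omega_\infty$, provided the $\epsilon_k$ are small enough that $\overline{B(z_k,\epsilon_k)}+tw\subset\Omega_\infty$ for all $t\in(0,1)$ whenever $z_k$ lies near $z$. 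Since $K$ is closed, any sequence of $z_k$ approaching $z$ must itself lie in $K$ and hence be absorbed by a ball added at an earlier stage, so only finitely many balls will be relevant near any given $C^0$ boundary point. The geometric bookkeeping to verify the Segment Property at all such points simultaneously---perhaps most cleanly executed by transfinite induction along the Cantor--Bendixson filtration of the countable closed set $K$---is the main technical hurdle.
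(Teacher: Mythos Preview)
Your strategy---iterate Lemma~\ref{lem2} at the points of $K$, then invoke known approximation on the enlarged domain---is the paper's too, but you miss the two observations that make it go through. First, $K$ is \emph{compact} (closed in the compact set $\partial\Omega$), so if at each stage one passes to the first listed point of $K$ not already covered, the open balls produced eventually cover $K$ and compactness terminates the process after finitely many steps, giving $v_N$ on $\Omega_N=\Omega\cup B_1\cup\cdots\cup B_N$; no Cauchy limits or Cantor--Bendixson induction are needed. Second, and decisively, one does \emph{not} need $\Omega_N$ (or your $\Omega_\infty$) to have $C^0$ boundary in order to appeal to \cite{AHP}. Instead one invokes \emph{Gauthier's localization theorem} \cite{Ga}, which says that $v_N\in\PSH(\overline\Omega)$ as soon as every $z_0\in\overline\Omega$ has a neighborhood $U$ with $v_N\in\PSH(\overline{\Omega\cap U})$. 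For $z_0$ lying in one of the added balls this is immediate, since $v_N$ is already plurisubharmonic on that open ball; for $z_0\in\partial\Omega\setminus\bigcup_j B_j$ the original Segment Property of $\Omega$ at $z_0$ gives local approximation by the translates $v_N(\,\cdot\,+tw)$. Only the local $C^0$ structure of $\Omega$ itself is used, and the global boundary geometry of the enlarged domain never enters.

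The ``main technical hurdle'' you isolate is therefore avoidable, and as stated it is a genuine obstacle rather than mere bookkeeping: at a point $p\in(\partial\Omega\setminus K)\cap\partial B(z_j,\epsilon_j)$ the segment direction $w$ supplied by $\Omega$ need not carry nearby points of $\overline{B(z_j,\epsilon_j)}\setminus\overline\Omega$ into $\Omega\cup B(z_j,\epsilon_j)$, and since the centers $z_j\in K$ can be limits of $C^0$ boundary points there is no way to keep the balls away from $\partial\Omega\setminus K$. Verifying that $\Omega_\infty$ is $C^0$, if possible at all, would require substantial extra work that Gauthier's theorem renders unnecessary.
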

\begin{proof}Because the set of $C^0$-boundary points is open, $K$ is compact. Let $u \in\PSH(\Omega)\cap C(\overline\Omega)$ and let $K=\{a_1,a_2,\ldots\}$. Let $\eps>0$. We apply Lemma \ref{lem2}  on a subsequence $(a_{n_j})$ of $K$ as follows. Applying the lemma to $a_{n_1}=a_1$ we find a ball $B_1=B(a_{n_1}, r_1)$ and a continuous  function $v_1$ defined on the closure of $\Omega_1=\Omega\cup B_1$ that is plurisubharmonic on $\Omega_1$ such that $|v_1-u|\le\epsilon/2$ on $\overline\Omega$ and such that $K\cap\partial B_1=\emptyset$.

Let $a_{n_2}$ be the first element of $K$ that is not in $B_1$. Then $a_{n_2}\notin \overline{B_1}$ and applying the lemma gives us a ball $B_2$ about $a_{n_2}$ and a continuous function $v_2$ defined on the closure of $\Omega_2=\Omega_1\cup B_2$ that is plurisubharmonic on ${\Omega_2}$ and satisfies $|v_1-v_2|\le e^{-2}\eps$ on $\overline{\Omega_1}$ and such that $\overline {B_2}\cap \overline {B_1}=\emptyset$ and $K\cap \partial B_2=\emptyset$. 

We continue in this fashion and obtain a sequence of balls $B_j$, and continuous functions $v_j$, which are defined on the closure of $\Omega_j=\Omega\cup\bigcup_1^jB_i$ and plurisubharmonic on  $\Omega_j$,  such that $|v_{j-1}-v_j|<2^{-j}\eps$ on $\Omega_j$,  $K\subset \cup_j B_j$ and $\overline{B_i}\cap\overline{B_j}=\emptyset$ for $i\ne j$. Because $K$ is compact, this sequence is finite,  $K\subset \cup_{j=1}^N B_j$ for some $N>0$. The function $v_N$ is defined and plurisubharmonic on $\Omega_N$ and $|v_N-u|<\eps$ on $\overline \Omega$. We will use Gauthier's Localization Theorem, see \cite{Ga}, also  cf.  \cite{Hed}, Chapter 5, which states that $f\in C(\overline{\Omega})\cap\PSH(\overline{\Omega})$ if and only if for every $z_0\in \overline\Omega$  there is an open set $U$ containing $z_0$ such that $f\in C(\overline{\Omega\cap U})\cap\PSH(\overline{\Omega\cap U})$. We apply this to $v_N$.
For $z_0\in \overline{\Omega}\cap B(a_j,r_j)$ or  $z_0\in\Omega$ this is immediate. If $z_0\in\partial\Omega\setminus\cup_j B(a_j,r_j)$ then $\Omega$ has by assumption the segment property at $z_0$.  Then  there exists a neighborhood $U$ of $z_0$ and a vector $w\in\CC^n$ such that 
\[U\cap\overline\Omega+tw\subset\Omega,\quad \text{ for all }0<t<1.\]
Fix a ball $B(z,r)$ that is compactly contained in $U$. Then  the functions $v_t$ defined by $v_t(z)=v_N(z+tw)$, which are defined on neighborhoods of $\overline\Omega\cap B(z_0,r)$ approximate $v_N$ uniformly on $\overline\Omega\cap\overline{B(z,r)}$ when $t\downarrow 0$. The conclusion is that $v_N$ and hence also $u$ belongs to $C(\overline{\Omega})\cap\PSH(\overline{\Omega})$.
\end{proof}

\begin{remark}Note that this proof also gives a proof of the PSH-Mergelyan property for $C^0$-domains. It is shorter, because of Gauthier's theorem, but essentially not different from the proof in \cite{AHP}. 
\end{remark}
\section{Approximation of H\"{o}lder continuous PSH functions}\label{sec:holder}
In many practical cases when working with plurisubharmonic functions, one has some extra quantitative information about the modulus of continuity (see for example \cite{DF} and \cite{Sic}). In such cases we can extend the results of Theorem \ref{thm2} to certain uncountable exceptional sets. In order to describe those sets, we need the following definition.
\begin{definition}
	Let $C>1$ and $\phi:\mathbb R^+ \to \mathbb R^+$. The set $K$ satisfies $\PS(C, \phi)$ ($K$ is said to be \emph{C-proportionally well separated of negligible $\phi$-measure}), if for each $\epsilon>0$ there exists a set of balls $\mathcal F =\{B(z_j,r_j)\}$ satisfying the following properties:
	\begin{enumerate}
		\item $\mathcal F$ covers $K$;
		\item
			$\mathcal F$ is proportionally $C$-separated in the sense that 
			\[
				B(z_j,r_j)\cap B\bigl(z_k,Cr_k\bigr)=\emptyset,\quad \mbox{ for all $j\ne k$};
			\]
		\item
			\[
				\sum_{j=1}^\infty \phi(r_j)<\epsilon.
			\]
	\end{enumerate}
\end{definition}
\begin{remark}
Note that $K$ satisfies $\PS(C,\phi)$ implies that the generalized Hausdorff measure $\Lambda_\phi(K)=0$.
\end{remark}

\begin{example}\label{ex:cantor2}
	Let ${\mathbf s}=\{s_1,s_2,\cdots\}$ be a sequence of positive numbers $0 <s_j<1$. Let $I_0$ be a closed interval in $\RR$ and $C(s_1)$ be $I_0\setminus J_1]$ where $J_1$ is an open interval about the center of $I_0$ of length $(1-s_1)|I_0| $. Proceeding by induction, $C(s_1,\ldots, s_n)$ is obtained by removing from each closed interval $I$ in $C(s_1,\ldots,s_{n-1})$ an open interval of length $(1-s_n)|I|$ about the center of $I$. The generalized Cantor set $C({\mathbf s})$ is now defined as follows.
     \[C({\mathbf s})=\bigcap_{n\ge 1}C(s_1,\ldots. s_n).
     \]
     
     It is well known that $C({\mathbf s})$ is homeomorphic to the standard Cantor set, and that its capacity is positive if and only if
     \begin{equation}\label{poscap}\sum_{n=1}^\infty\frac{\log s_n }{2^n}
     \end{equation}
     is finite, cf.~\cite{Nev,Ran}.

     Now take ${\mathbf s}=\{2^{-n}\}_{n=1}^\infty$. Then by \eqref{poscap} $C(\mathbf{s} )$ has positive capacity.
 We claim that for every $\delta>0$ the set $C(\mathbf{s} )$ satisfies $\PS(\phi,C)$ for every $C$ and $\phi(s)=s^\delta$.
 
 Indeed, note that $C(2^{-1},\ldots 2^{-N})$ consists of $2^N$ intervals  of length $L_N=\prod_1^N (s_j/2)=2^{-N(N+3)/2}$. Each of these intervals can be covered by an interval with the same midpoint and radius $r_N=L_N$. Thus $C(\mathbf{s} )\subset C(2^{-1},\ldots 2^{-N})$ is covered by $2^N$ intervals $I_j$ of radius $r_N$.  The distance between
 $I_j$ and $CI_k$ is smallest when $I_j$ and $I_k$ belong to the same interval in $C(2^{-1},\ldots 2^{-(N-1)})$. This distance then equals
 \[L_{N-1} - (2+C)r_N=L_{N-1}-(2+C)2^{-N-1}L_{N-1},\]
 which is positive if $C<2^{N+1}-2$.
 
For fixed $\delta>0$
	\begin{align*}
		\sum_{j=1}^\infty r_j^\delta	&=2^N \left(2^{(-N(N+3))/2}\right)^\delta\\
							&\leq 2^{N-\delta(N^2+3N)/2},
	\end{align*}
    which tends to 0 if $N\to\infty$. 
\end{example}

The following example shows that it might be hard to give sufficient criteria for sets to be proportionally separated.

\begin{example}
	The set $X \subset \mathbb R$ defined by $X=\{1/j:j\in \mathbb N\}$ satisfies Property $\PS(C,\phi)$ with $C=2$ and $\phi(t)=t^\alpha$ for all $\alpha >0$. On the other hand the set $Y=X \cup \{0\}$ does not satisfy Property $\PS(C,t^\alpha)$ for any $C>1$ and $\alpha$. 
\end{example}
We are now ready to state and prove the main result of this section.
\begin{theorem}\label{thm:holder}
	Suppose that $D \subset \mathbb C^n$ is a bounded domain and that $u \in \PSH(D)\cap C(\bar D)$, with a concave modulus of continuity $\omega$. Suppose also that there is a set $E\setminus \partial D$ such that each $z \in \partial \Omega \setminus E$ has a neighborhood $U \subset \mathbb C^n$ such that $u \in \PSH(\overline{U\cap D})$.	 

If $E$ satisfies $\PS(C,\phi)$ with $\phi(t)=\omega(t)\log(1/t)$, then $u \in \PSH(\overline D)$.
\end{theorem}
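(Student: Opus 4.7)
My plan is to adapt the iterative scheme of Theorem~\ref{thm2} to the uncountable setting by replacing the enumeration of exceptional points with a covering coming from the $\PS(C,\phi)$ hypothesis, and by sharpening Lemma~\ref{lem2} so that the per-step error matches the summable quantity $\phi(t)=\omega(t)\log(1/t)$. Given $\delta>0$, invoke $\PS(C,\phi)$ to obtain a $C$-proportionally separated family $\{B(z_j,r_j)\}_j$ covering $E$ with $\sum_j\omega(r_j)\log(1/r_j)<\delta$, and order them by decreasing radius. I may assume $C$ is large (say $C>2e$), which ensures not only pairwise disjoint closures but also that each $B(z_j,er_j)$ is disjoint from every other $B(z_k,r_k)$.

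The main technical step is a sharpening of Lemma~\ref{lem2}: in its proof, take the transition scale $\eta=e\epsilon$ instead of $\diam(D)^{2/3}\epsilon^{1/3}$. The cap condition on $B(a,\epsilon)$ then forces only $\alpha\ge\omega(\epsilon)+2\omega(e\epsilon)$, which by concavity of $\omega$ is bounded by $3e\omega(\epsilon)$; the corresponding maximal outside perturbation $\alpha\log(\diam(D)/\eta)$ yields
\[
\sup_{\overline D}|u-v|\le C_0\,\omega(\epsilon)\log(1/\epsilon).
\]
Iterate this sharpened lemma at each $z_j$ (applied to $v_{j-1}-v_{j-1}(z_j)$ with parameter $r_j$): starting from $v_0=u$ and putting $\Omega_j=\Omega\cup\bigcup_{i\le j}B_i$, obtain $v_j\in\PSH(\Omega_j)\cap C(\overline{\Omega_j})$ with $\|v_j-v_{j-1}\|_{\overline{\Omega_{j-1}}}\le C_0\omega(r_j)\log(1/r_j)$. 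Summing over $j$ gives a bound of $C_0\delta$, so $v_j\to v$ uniformly on $\Omega_\infty=\Omega\cup\bigcup_j B_j$, $v\in\PSH(\Omega_\infty)\cap C(\overline{\Omega_\infty})$, and $\|v-u\|_{\overline\Omega}\le C_0\delta$. Invoking Gauthier's localization theorem exactly as at the end of the proof of Theorem~\ref{thm2}---points of $\overline\Omega\cap\Omega_\infty$ are handled directly by $v$'s plurisubharmonicity, while the remaining boundary points lie in $\partial\Omega\setminus E$, where the segment property produces a local translation giving plurisubharmonic approximations---then yields $v\in\PSH(\overline\Omega)$; letting $\delta\to 0$ concludes $u\in\PSH(\overline\Omega)$.

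The principal obstacle is controlling the local modulus of continuity of the iterates: the per-step bound genuinely involves the local modulus of $v_{j-1}$ at $z_j$ rather than the original $\omega$, so one must verify that this does not deteriorate too badly through the iteration. The $C$-proportional separation, with $C$ sufficiently large, is precisely the tool needed here: at scale $\eta_j=er_j$ around $z_j$ each previously added logarithmic correction $\alpha_i\log(|z-z_i|/\eta_i)$ is smooth with Lipschitz constant of order $\alpha_i/(Cr_i)$, and its cumulative contribution to the local modulus is governed by the summable quantity $\sum_i\alpha_i\lesssim\sum_i\omega(r_i)\le\delta$ (using $\log(1/r_i)\ge 1$ for small $r_i$). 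Careful bookkeeping of this modulus---perhaps by processing balls of like scales in batches rather than strictly sequentially---is where the bulk of the technical work of the proof lies.
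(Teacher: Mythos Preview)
Your iterative scheme contains a genuine gap that you yourself flag but do not close: the control of the modulus of continuity of the iterates $v_{j-1}$ at the successive centers $z_j$. Your sketch bounds the cumulative Lipschitz contribution of the previously inserted logarithmic terms by $\sum_i\alpha_i$, but this is circular: $\alpha_i$ is (a constant times) the \emph{local modulus of $v_{i-1}$}, which already contains the earlier corrections. If one sets up the recursion honestly, say in the model case of $N$ equal-radius balls, one gets $\alpha_j\lesssim \omega(r)+\sum_{i<j}\alpha_i$, which blows up geometrically in $j$; the total error $\sum_j\alpha_j\log(1/r_j)$ is then not controlled by $\sum_j\omega(r_j)\log(1/r_j)$. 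Your proposed fix (batching by scale) is not developed, and it is not clear it can be made to work without essentially reinventing the simultaneous construction below. A secondary issue: you assume $C>2e$, but the hypothesis only supplies $\PS(C,\phi)$ for some fixed $C>1$, and the property is inherited only for \emph{smaller} $C$, not larger.

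The paper avoids the iteration altogether. It fixes one finite $C$-separated cover $\{B(z_j,r_j)\}$ with $\sum_j\omega(r_j)\log(1/r_j)<\epsilon$ and builds a single approximant in one stroke: with $\lambda_k(z)=\tfrac{2}{\log C}\,\omega(Cr_k)\log\bigl|\tfrac{z-z_k}{Cr_k}\bigr|$, set $v_j$ equal to $u+\lambda_j$ capped by the constant $u(z_j)-\omega(Cr_j)$ on $B(z_j,Cr_j)$, then put
\[
\tilde v_j=v_j+\sum_{k\ne j}\lambda_k,\qquad v=\max_j \tilde v_j.
\]
The $C$-proportional separation forces $v=\tilde v_j$ on $B(z_j,Cr_j)$, so $v$ is constant on each $B(z_j,r_j)$ and extends plurisubharmonically across it; away from the balls all $\tilde v_j$ coincide with $u+\sum_k\lambda_k$. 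Because the coefficient of every logarithmic pole is exactly $\tfrac{2}{\log C}\omega(Cr_k)$ --- computed from the \emph{original} modulus $\omega$, never from a degraded one --- the error $|u-v|$ is bounded directly by $\tfrac{2C}{\log C}\sum_k\omega(r_k)\log(1/r_k)<\tfrac{2C}{\log C}\epsilon$. Gauthier's localization then finishes as you describe. The point is that the single-shot $\max$ construction makes the modulus-tracking problem disappear; this is the idea your proposal is missing.
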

\begin{remark}
Since $\overline \Omega$ is compact and $u$ is continuous, $u$ admits \emph{at least} one concave modulus of continuity.
\end{remark}

\begin{remark}
	One might wonder if the assumption on the modulus of continuity is necessary, or just a consequence of our method of proof. Although we don't know that this condition in sharp, Example \ref{example1} of the subsequent section shows that some condition on the modulus of continuity is necessary for the theorem to hold.
\end{remark}

\begin{proof}
Without loss of generality,  assume that $\diam(D)=1$. For a fixed $\epsilon >0$, let $\{B(z_j,r_j\}_{j=1}^N$ be a covering of $K$ satisfying
\[
	\sum_{j=1}^N\omega(r_j)\log(1/r_j)\leq \epsilon,
\]
such that $\{B(z_j,r_j\}_{j=1}^N$ is pairwise disjoint and $B(z_j,r_j)\cap B(z_k,Cr_k)=\emptyset$ for all $j\ne k$ and some $1<C<2$.

Now let
	\[
		v_j(z)=\begin{cases}
			\max\{u(z)+\frac{2}{\log(C)}\omega(Cr_j) \log\bigl| \frac{z-z_j}{Cr_j}\bigr|,u(z_j)-\omega(Cr_j)\},&\mbox{if $|z-z_j|\leq Cr_j$}\\
			u(z)+\frac{2}{\log(C)}\omega(Cr_j) \log\bigl| \frac{z-z_j}{Cr_j}\bigr|,&\mbox{if $|z-z_j|> Cr_j$}
			\end{cases}
	\]
If $|z-z_j|=Cr_j$, it follows that
\[
	u(z)+2\omega(Cr_j)\log\bigl| \frac{z-z_j}{Cr_j}\bigr|\geq u(z_j)- \omega(Cr_j)
\]
and therefore it follows by the usual gluing argument that $v\in \PSH(D)\cap C(\bar D)$. Furthermore, when $\abs{z-z_j}\leq r_j$,
	\begin{align*}
		u(z)+\frac{2}{\log(C)}\omega(Cr_j) \log\bigl|\frac{z-z_j}{Cr_j}\bigr|	&\leq u(z_j)+\omega(r_j)+\frac{2}{\log(C)}\omega(Cr_j)\log\bigl(r_j/(Cr_j)\bigr)\\
			&=u(z_j)+\omega(r_j)-\frac{2}{\log(C)}\omega(Cr_j)\log(C)\\
			&\leq u(z_j)-\omega(Cr_j),
	\end{align*}
	and hence $v_j$ is constant on $D \cap B(z_j,r_j)$. Thus $v_j$ can be trivially plurisubharmonically extended to $D\cup B(z_j,r_j)$.
	
	Now let
	\[
		\tilde v_j(z)=v_j(z)+\frac{2}{\log(C)}\sum_{k\neq j}\omega(Cr_k) \log\bigl| \frac{z-z_k}{Cr_k}\bigr|,
	\]
and define
\[
	v(z)=\max\{\tilde v_j(z): 1\leq j \leq N\}.
\]
We now want to show that $v(z)=\tilde v_j(z)$ when $|z-z_j|<Cr_j$, which will imply that $v$ admits a plurisubharmonic extension to $D\cup B(z_j,r_j)$. For this, suppose that $|z-z_j|<Cr_j$ and let $k\neq j$ be arbitrary. Then it follows from the fact that  $B(z_j,Cr_j)\cap B(z_k,Cr_k)=\emptyset$, that 
\begin{align*}
	\tilde v_k(z)	&=u(z)+\frac{2}{\log(C)}\sum_{k=1}^{N}\omega(Cr_k) \log\bigl| \frac{z-z_k}{Cr_k}\bigr|\\
				&\leq v_j(z)+\frac{2}{\log(C)}\sum_{k\neq j}\omega(Cr_k) \log\bigl| \frac{z-z_k}{Cr_k}\bigr| \\
				&=\tilde v_j(z).
\end{align*}
To see that $v$ approximates $u$, we consider two different cases.

\begin{case}
\[
	z \in \Omega \cap \bigcup_{j=1}^N B(z_j, Cr_j).
\]
Suppose that $z \in B(z_j,Cr_j)$ for some $j = 1 \ldots N$. Then it was earlier demonstrated that 
\[
	v(z)=v_j(z)+\frac{2}{\log(C)}\sum_{k\neq j}\omega(Cr_k) \log\bigl| \frac{z-z_k}{Cr_k}\bigr|.
\]
There are now two possibilities.
\begin{subcase}
If 
\[
	v_j(z)=u(z)+\frac{2}{\log(C)}\omega(Cr_j) \log\bigl| \frac{z-z_j}{Cr_j}\bigr|,
\]
then $r_j\le|z-z_j|\le C r_j$ and hence $\log\bigl| \frac{z-z_\ell}{Cr_\ell}\bigr|$ is negative for $\ell = j$ and positive for all other $\ell$. Keeping this in mind and that $\diam(D)< 1$, we see that
\begin{align*}
\tilde v_j(z)-u(z)&= \frac{2}{\log(C)}\sum_{k=1}^N\omega(Cr_k) \log\bigl| \frac{z-z_k}{Cr_k}\bigr|  \\
	&\leq \frac{2}{\log(C)}\sum_{k\neq j}\omega(Cr_k) \log\left(\frac{1}{Cr_k}\right)\\
	&\leq \frac{2}{\log(C)}\sum_{k\neq j}\omega(Cr_k)\log\left(\frac{1}{r_k}\right)\\
	&\leq \frac{2C}{\log(C)}\sum_{k\neq j}\omega(r_k)\log\left(\frac{1}{r_k}\right)\\
	&\leq \frac{2 C}{\log(C)}\epsilon,
\end{align*}
where we in the second to last inequality have used the fact that $\omega$ is concave.

On the other hand, it also holds that
\begin{align*}
\tilde v_j(z)-u(z)&= \frac{2}{\log(C)}\sum_{k=1}^N\omega(Cr_k) \log\left| \frac{z-z_k}{Cr_k}\right|  \\
    &\geq \frac{2}{\log(C)} \omega(Cr_j) \log\left|\frac{z-z_j}{Cr_j}\right|\\
    & \geq \frac{2}{\log(C)} \omega(Cr_j) \log\left(\frac 1{C}\right)\\
			&=-2\omega(Cr_j)\geq -2C\omega(r_j)\geq -4\epsilon.
\end{align*}
\end{subcase}
\begin{subcase}
If $v_j(z)=u(z_j)-\omega(Cr_j)$, it similarly follows that
\begin{align*}
	u(z)-\tilde v_j(z)&\leq u(z)- u(z_j)+\omega(Cr_j) -\frac{2}{\log(C)}\sum_{k\neq j}\omega(Cr_k) \log\bigl| \frac{z-z_k}{Cr_k}\bigr|\\
 	&\leq 2\omega(Cr_j)\leq 2C\epsilon,
\end{align*}
and
\begin{align*}
	\tilde v_j(z)-u(z)&\leq  u(z_j)-u(z)-\omega(Cr_j) +\frac{2}{\log(C)}\sum_{k\neq j}\omega(Cr_k) \log\bigl| \frac{z-z_k}{Cr_k}\bigr|\\
				&\leq  \omega(Cr_j)-\omega(Cr_j)+\frac{2}{\log(C)}\sum_{k\neq j}\omega(Cr_k) \log(1/Cr_k)\\
				&\leq  \frac{2C}{\log(C)}\sum_{k\neq j}\omega(r_k) \log(1/r_k)\\
				&\leq \frac{2C}{\log(C)}\epsilon.
\end{align*}
\end{subcase}
\end{case}
\begin{case}
	\[
	z \in \Omega \setminus \bigcup B(z_j, Cr_j).
	\]
		In this case note that now for all $j$ it holds that $v_j(z)=u(z)+\frac{2}{\log(C)}\omega(Cr_j) \log\bigl| \frac{z-z_j}{Cr_j}\bigr|$. It follows similarly as above that
\begin{align*}
	|u(z)-\tilde v_j(z)|&=\frac{2}{\log(C)}\sum_{k=1}^N\omega(Cr_k) \log\bigl| \frac{z-z_k}{Cr_k}\bigr|  \\
	&\leq \frac{2C}{\log(C)}\sum_{k=1}^N\omega(r_k)\log\left(\frac{1}{r_k}\right)\\
	&\leq \frac{2 C}{\log(C)}\epsilon.
\end{align*}
\end{case}
We have thus shown that on $\overline \Omega$, $u$ is a uniform limit of functions like $v$.  To show that $u\in \PSH(\bar D)$, it therefore suffices to show that $v\in \PSH(\bar D)$. It follows from Gauthier's localization lemma, that it is enough to show that every $z \in \partial D$ has a neighborhood $U_z$ such that $v \in \PSH(\overline{U_z\cap D})$. For $z \in \partial D\setminus K$, this follows from the assumptions of the theorem and for $z \in K$, it follows by the construction that there is a neighborhood $U_z$ of $z$ where $v$ is constant and hence trivially in $\PSH(\overline{U_z\cap D})$.
\end{proof}
\section{Domains without the Mergelyan property}  \label{Sec3}
The following result, which was  mentioned in the introduction, goes back to Keldysh and Deny in case of harmonic functions and is in full generality a consequence of results by Bliedtner and Hansen, \cite{BH}, and cf. \cite{Gar}. It reads as follows.
\begin{theorem}\label{KD}
Let $K$ be a compact subset of $\RR^n$. The following are equivalent
\begin{enumerate}
\item Every function  $u$ that is  continuous on $K$ and (sub)harmonic on $K^o$ can be uniformly approximated on $K$ by functions that are (sub)harmonic on (shrinking) neighborhoods of $K$.
\item The sets $\RR^n\setminus K$ and $\RR^n\setminus K^o$ are thin at the same points.
\end{enumerate}
\end{theorem}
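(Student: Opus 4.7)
The plan is to prove the two implications separately, with $(2)\Rightarrow(1)$ carrying the substantive content.

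For $(2)\Rightarrow(1)$ I would use Hahn--Banach duality. If uniform approximation on $K$ by neighborhood-harmonic functions fails, there exists a signed Borel measure $\mu$ on $K$ that annihilates every $g\in C(K)$ harmonic on a neighborhood of $K$ but does not annihilate some $f\in C(K)$ that is harmonic on $K^o$. Testing $\mu$ against the Newtonian (or logarithmic, if $n=2$) kernel with pole $y\notin K$ shows that the potential $U^\mu$ is harmonic and vanishes on the unbounded component of $\RR^n\setminus K$; a connectedness/unique-continuation argument extends this to $U^\mu\equiv 0$ on all of $\RR^n\setminus K$. The crucial step is upgrading this to $U^\mu\equiv 0$ on $\RR^n\setminus K^o$, and hypothesis~(2) is tailor-made for it: at each point of $\partial K\cap\partial K^o$ the fine closures of $\RR^n\setminus K$ and of $\RR^n\setminus K^o$ coincide, so fine continuity of $U^\mu$ propagates the identity $U^\mu=0$ across the gap $K\setminus K^o$. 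Combined with the balayage identity $\int f\,d\mu=\int H^{K^o}_f\,d\mu$, this forces $\mu$ to annihilate every $f\in C(K)$ harmonic on $K^o$, contradicting the choice of $f$. The subharmonic case reduces to the harmonic one via $f\le H^{K^o}_f$ and approximation from above, or is proved by replacing Hahn--Banach with a separation theorem for convex cones.

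For $(1)\Rightarrow(2)$ I would argue by contrapositive. Since $\RR^n\setminus K\subset\RR^n\setminus K^o$, thinness of the larger set at a point forces thinness of the smaller, so (2) can fail only at a point $z_0$ where $\RR^n\setminus K$ is thin but $\RR^n\setminus K^o$ is not. Non-thinness of $\RR^n\setminus K^o$ makes $z_0$ a regular Dirichlet point for $K^o$, so the value at $z_0$ of every $f\in C(K)$ harmonic on $K^o$ is rigidly determined by the boundary data of $f$ on $\partial K^o\setminus\{z_0\}$. Thinness of $\RR^n\setminus K$ at $z_0$, by contrast, supplies a superharmonic function $w$ with $w(z_0)<\liminf_{z\to z_0,\,z\notin K}w(z)$; after truncation and adjustment, this yields an $f_0\in C(K)$ harmonic on $K^o$ whose value at $z_0$ differs by a fixed amount $c>0$ from $\liminf_{z\to z_0,\,z\notin K}f_0(z)$. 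Any $v$ harmonic on a neighborhood of $K$ is continuous through both $K$ and $\RR^n\setminus K$, so $v$ cannot display such a jump, giving a uniform lower bound $\sup_K|f_0-v|\ge c/2$ that refutes~(1).

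I expect the main obstacle to be the fine-topological step in $(2)\Rightarrow(1)$: passing from $U^\mu\equiv 0$ on $\RR^n\setminus K$ to $\mu$ annihilating every $C(K)$-function harmonic on $K^o$. This is precisely where the Bliedtner--Hansen machinery and the symmetric thinness hypothesis enter essentially; without~(2), $U^\mu$ can have a genuine fine jump across $\partial K\cap\partial K^o$ that classical potential theory alone cannot smooth over. A secondary issue is the subharmonic version, where one must work with a convex cone instead of a vector space and consequently with a pair of positive balayage measures rather than a single signed one.
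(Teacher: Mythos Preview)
The paper does not prove Theorem~\ref{KD}; it is stated as a known result and attributed to Keldysh, Deny, and (in full generality) Bliedtner--Hansen, with a pointer to Gardiner's monograph. There is therefore no paper-proof to compare your proposal against.

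That said, your outline is broadly in line with the functional-analytic treatment one finds in the cited references. Two small comments. First, in $(2)\Rightarrow(1)$ you do not need a unique-continuation step: for every $y\notin K$ the fundamental solution with pole at $y$ is already harmonic on a neighborhood of $K$, so $U^\mu(y)=0$ on all of $\RR^n\setminus K$ directly. Second, in $(1)\Rightarrow(2)$ the construction of the obstructing $f_0$ is the delicate part; the clean way is to note that thinness of $\RR^n\setminus K$ at $z_0$ puts $z_0$ in the fine interior of $K$, so any $v$ harmonic on a neighborhood of $K$ is finely continuous there and is pinned by its values on a fine neighborhood in $K$, whereas a solution of the Dirichlet problem on $K^o$ with boundary data concentrated near $z_0$ (regular for $K^o$ since $\RR^n\setminus K^o$ is not thin there) can take an arbitrary value at $z_0$ independent of that fine neighborhood. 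Your ``jump'' heuristic captures this, but the precise formulation goes through fine continuity rather than an ordinary $\liminf$.
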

 With this result, examples like the following have been constructed. 
\begin{example} \label{example0} Let $\{a_j\}$ be a sequence in $\Delta\setminus[-1/2,1/2]$, such that $\overline{\{a_j\}}=\{a_j\}\cup [-1/2,1/2]$. Define $c_j>0$ sequentially such that $h(z)=\sum_j c_j\log|z-a_j|>-1$ on $[-1/2,1/2]$, and next $r_j$ in such a way that  $h<-2$ if $|z-a_j|<r_j$ and, moreover the discs $\{|z-a_j|\le r_j\}$ are disjoint. Then 
 \[D=\Delta\setminus \overline{\left(\cup_j\{|z-a_j|<r_j\}\right)}=\Delta\setminus\left([-1/2,1/2]\cup\bigcup_j\{|z-a_j|\le r_j\}\right)\] 
is a fat domain in $\CC$, $K=\overline D$, and $K^o=D$ do not satisfy condition (2). In fact $\CC\setminus K= \bigcup_j\{|z-a_j|<r_j\}$ is thin at all points in $[-1/2,1/2]$, but $\CC\setminus K^o$ is not.
Hence $K$ does not have the approximation property (1).

For completeness, and because  Theorem \ref{KD} was misunderstood in \cite{HP} see also \cite[chapter 6]{Hed}, we show directly that $D$ is hyperconvex, but not $P$-hyperconvex, that is, $D$ admits a bounded subharmonic exhaustion function $g$, but no such function belongs to $\PSH(\overline D)$. This corrects Remark 4.10 in \cite{HP}. 

Observe that the domain $D$ is a regular for the Dirichlet problem, since it has no isolated boundary points. Denote by $F$ the solution of the Dirichlet problem on $D$ with boundary values $|z|^2$ on $\partial D$. Then the function $ z\mapsto |z|^2-F(z)$ is a bounded subharmonic exhaustion function. To see that no bounded subharmonic exhaustion function $g$ can be in $\PSH(K)$, notice that $[-1/2,1/2]$ is contained in the fine interior of $K$, because $[-1/2,1/2] \subset \{h>-3/2\}$, a finely open subset of $K$. Now if $g$ were in $\PSH(K)$, $g$ would be finely subharmonic on the fine interior of $K$ as uniform limit of subharmonic functions. But $g=0$ on $[-1/2,1/2]$ and $\le 0$ on a fine neighborhood of $K$, which would contradict the \emph{fine maximum principle}, cf.  ~\cite[Theorem 12.6]{Fu}.
 
Observe that this also shows that $D$ does not have the PSH-Mergelyan property either.

\end{example}

The example in \cite{Hed}, mentioned in the introduction, can be adapted to show that even if the exceptional set $K$ in the boundary is very small, the domain will in general not have the PSH-Mergelyan property.

\begin{example}\label{example1} (1) Let $C=C(\mathbf{s})$ ($\mathbf{s}=\{2^{-k}_{k\ge 1}\}$) in the interval $[-1/2,1/2]$ be the Cantor set of Example \ref{ex:cantor2}. We have seen that $C$ has  Hausdorff dimension 0 and positive capacity. Let $\Delta$ be the open disc with radius 2 in $\CC$, and let $D=\Delta \setminus C$. Then $D$ is regular for the Dirichlet problem, and hence there exists a continuous function $u$ on $\overline \Delta$ that is harmonic on $D$ satisfying $u|_{\{|z|=2\}}=0$ and $u|_{C}=1$. Observe that $u\notin \PSH(\overline D)\cap C(\overline D)$ because of the maximum principle. 

 As $D$ is a regular domain for the Dirichlet problem, it admits a bounded continuous subharmonic exhaustion function $\psi$ with $-1\le\psi\le 0$. 
We set
\[\Omega=\{(z,w)\in D\times \CC: |w|+\psi(z)<0\}.\]
By its definition $\Omega$ is hyperconvex and fat. It is $C^0$ at all boundary points of the form $(z, \psi(z)e^{i\theta})$, $z\in D$, $\theta\in[0,2\pi)$. The function $u$, now viewed as a function on $\overline \Omega$, is not in 
$\PSH(\overline \Omega)\cap C(\overline \Omega)$, again because of the maximum principle.

In fact, the functions  $u$ and $\psi$ in the example cannot not even be approximated by subharmonic functions that are H\"older continuous on $\Delta$. Indeed, by a theorem of Sadullaev and Yarmetov, \cite{SaYa}, such H\"older continuous functions would in fact be subharmonic on $\Delta$, again violating the maximum principle. 

Similarly, we find that with $\Omega$ and $u$ as above, $u$ is not a uniform limit on $\overline \Omega$ of plurisubharmonic functions on $\Omega$ that are H\"older continuous on $\overline\Omega$. On the other hand, by Theorem \ref{thm:holder}, every H\"older continuous function in $\PSH(\Omega)$ belongs to $\PSH(\overline \Omega)$.
\end{example}

 In view of Theorem \ref{KD} and Example \ref{example0} one might hope that e.g. conditions like: \emph{the sets $\CC^n\setminus K$ and $\CC^n\setminus K^o$ are (pluri)-thin at the same points}, or perhaps that $K\cap L$ has the PSH-Mergelyan property for all complex lines $L$,
 would be equivalent with the PSH-Mergelyan property. This is not the case.

\begin{example}Let $K=\{(z,w): z\in \overline \Delta, |w|\le d(z, [-1/2,1/2])\}$ then $\CC^2\setminus K$ and $\CC^2\setminus K^o$ are both thin and pluri-thin at points of $K^o$ and not thin or pluri-thin at other points of $\CC^2$. For points in $K\setminus K^o$ this follows from the Poincar\'e Zaremba Criterion: in such points $p$ there exists an open solid cone of revolution with vertex $p$ in $\CC^2\setminus K$, hence for a (pluri) subharmonic function $h$ defined in a neighborhood of $p$ we have $\limsup_{z\to p,z\notin K} h(z)=h(p)$.   However,  $K$ does not have the PSH-Mergelyan property.
\end{example}

\begin{example} Let $\Omega$ be the domain of Example \ref{example1}, we will write $w=u+iv$, and let $\tilde \Omega=(\Omega\cup \{v>0\})\cap \{|z|^2+|w|^2<3/2\}$. Then $\partial\tilde\Omega$ has $C^0$ boundary. This is clear at boundary points $(z,w)$ with $w\ne 0$ and at boundary points belonging to $\{|z|^2+|w|^2=3/2\}$. Define for $z\in\Delta$ and $|u|+\phi(z)\le 0$ the continuous function $\psi: (z, u)\mapsto-\sqrt{\phi^2(z)-u^2}$. Then the part of the boundary where $|z|^2<3/2$ and $v\le 0$ is the  graph $(z, u+i\psi(z,u))$ over a suitable domain in $D \times \RR$. 
Hence $\tilde\Omega$ has the PSH-Mergelyan property, but its intersection with the line $w=0$ has not.
\end{example}

\begin{question}
With an eye to Example \ref{example1} we ask the following. Suppose a domain $\Omega$ has $C^0$-boundary, except for a compact set $K$ in $\partial \Omega$. Suppose also that $K$ is a polar subset of a 1-dimensional analytic variety $J$. Does $\Omega$ have the PSH-Mergelyan property?
\end{question}

\begin{question} Suppose that $\Omega$ is a fat, bounded, contractible domain. Does $\Omega$ have the PSH-Mergelyan property?
\end{question}

\bibliographystyle{amsplain}

\end{document}